\titleformat*{\section}{\large\bfseries}
\titleformat*{\subsection}{\large\sffamily}
\theoremstyle{plain}
\newtheorem{thm}{Theorem}
\newtheorem{prop}[thm]{Proposition}
\newtheorem{lem}[thm]{Lemma}
\newtheorem{cor}[thm]{Corollary}
\newtheorem*{prb*}{Problem}
\theoremstyle{definition}
\newtheorem{ex}[thm]{Example}
\numberwithin{equation}{section}
\newcommand{\cm}[1]{}
\newcommand\wt[1]{\widetilde{#1}}
\renewcommand{\k}{\mathbbm{k}}
\newcommand\K{\mathcal{K}}
\def\dual#1{\expandafter\dual@aux#1\@nil}
\def\dual@aux#1/#2\@nil{\begin{tabular}{@{}c@{}}#1\\#2\end{tabular}}
\newcommand{\Xm}{X^{\langle m\rangle}}
\newcommand{\Km}{\mathcal{K}^{\langle m\rangle}}
\newcommand{\lk}{\mathrm{link}(\sigma;\K)}
\newcommand{\Serre}{$(S_r)\ $}
\title{Sequentially $(S_r)$ is a topological property}
\author{Afshin Goodarzi}
\affil{\small{ Royal Institute of Technology, Department of Mathematics, S-100 44, Stockholm, Sweden.}}
\date{ }
\begin{document}

\maketitle

\begin{abstract}

We show that the sequentially \Serre condition for simplicial complexes is a topological property. Along the way, we present an elementary proof for the fact that the Serre's condition \Serre is a topological property. 

\end{abstract}

\section{The Serre's Condition \Serre}


  A $(d-1)$-dimensional simplicial complex $\K$ is said to satisfy the \emph{Serre's condition} \Serre over a field $\k$ if for each face $\sigma$ of $\K$ the $i^{\text{th}}$ reduced homology group $\wt{H}_i(\lk;\k)$ vanishes for all $i<\min\{r-1,d-\dim \sigma-2\}$. If $\K$ satisfies the Serre's condition ($S_r$), we simply say that $\K$ is ($S_r$). The concept of Serre's condition $(S_r)$ interpolates between the Cohen--Macaulayness and normality of complexes. On of extreme being ($S_d$) is equivalent to the Cohen--Macaulayness, while on the other extreme ($S_2$) is the familiar topological property of being normal. In a pioneering work \cite{MTC}, Munkres showed that Cohen--Macaulayness is a topological property, that is, it only depends on the geometric realization $|\K|$ of the complex $\K$ in question and possibly the characteristic of the field $\k$. It was shown by Yanagawa~\cite[Theorem 4.4]{Yanagawa2011} that the Serre's condition \Serre is a topological property.  Below, we give an alternative elementary proof of this fact that will be used in the next section.

Let $X$ be a polyhedron, i.e., a space homeomorphic to the geometric realization of some simplicial complex. Denote by $D_j(X;\k)$ the set of all points $p\in X$ such that the local homology group $H_j(X,X-p;\k)\neq 0$. Let $\Xm$ be the closure of the union of all $D_j(X;\k)$ for $j\geq m$. 

\begin{lem}
The set $\Xm$ is the closure of the set of all points $p\in X$ such that $p$ has a neighborhood in $X$ which is homeomorphic to an open $j$-dimensional ball for some $j\geq m$.   
\end{lem}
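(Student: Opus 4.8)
The plan is to prove the two inclusions separately, working with a fixed triangulation $\K$ of the polyhedron $X$; since both sets appearing in the statement are defined purely topologically, any triangulation will serve. Write $U$ for the set of points $p\in X$ admitting a neighborhood homeomorphic to an open $j$-dimensional ball for some $j\geq m$, so that the goal is to establish $\Xm=\overline{U}$. The only input needed from the triangulation is the standard local computation: if $p$ lies in the relative interior of a face $\sigma$ with $s=\dim\sigma$, then a neighborhood of $p$ is homeomorphic to $\R^{s}$ times the open cone on $|\lk|$, and hence $H_i(X,X-p;\k)\cong\wt{H}_{i-s-1}(\lk;\k)$.

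For the inclusion $\overline{U}\subseteq\Xm$: if $p$ has a neighborhood homeomorphic to an open $j$-ball with $j\geq m$, then $H_j(X,X-p;\k)\cong H_j(\R^{j},\R^{j}-0;\k)=\k\neq 0$, so $p\in D_j(X;\k)$. Thus $U\subseteq\bigcup_{j\geq m}D_j(X;\k)$, and passing to closures yields $\overline{U}\subseteq\Xm$.

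For the reverse inclusion, since $\overline{U}$ is closed it suffices to show $\bigcup_{j\geq m}D_j(X;\k)\subseteq\overline{U}$. Fix $p\in D_j(X;\k)$ with $j\geq m$ and let $\sigma$ be the unique face whose relative interior contains $p$, with $s=\dim\sigma$; by the local formula $\wt{H}_{j-s-1}(\lk;\k)\neq 0$. The relevant structural observation is that the relative interior of any facet (maximal face) $F$ consists of manifold points: since $F$ is maximal its link is $\{\emptyset\}$, so a neighborhood of an interior point of $F$ is homeomorphic to $\R^{\dim F}$. Hence it is enough to produce a facet of dimension at least $m$ containing $\sigma$, for then $p\in\sigma\subseteq F=\overline{\mathrm{relint}(F)}\subseteq\overline{U}$.

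The key step is to extract such a facet from the nonvanishing of homology. If $j=s$, then $\wt{H}_{-1}(\lk;\k)\neq 0$ forces $\lk=\{\emptyset\}$, so $\sigma$ is itself a facet of dimension $s=j\geq m$ and $p\in U$. If $j>s$, then $\wt{H}_{j-s-1}(\lk;\k)\neq 0$ with $j-s-1\geq 0$ forces $\lk$ to contain a face $\tau$ of dimension $j-s-1$; consequently $\sigma\cup\tau$ is a face of $X$ of dimension $j$, and any facet $F\supseteq\sigma\cup\tau$ satisfies $\dim F\geq j\geq m$. Combining the two inclusions gives $\Xm=\overline{U}$. I expect the main obstacle to be precisely this last step — converting the homological information carried by the point $p$ into the combinatorial existence of a nearby high-dimensional simplex — which relies on the local homology formula together with the elementary fact that a simplicial complex with nonvanishing $k$-th reduced homology must contain a $k$-dimensional face.
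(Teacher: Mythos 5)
Your proof is correct and follows essentially the same route as the paper: excision gives the inclusion of the set of points with ball neighborhoods into $X^{\langle m\rangle}$, and for the converse the local homology formula at a point of $D_j(X;\k)$ is converted into the existence of a facet of dimension at least $j$ containing $\sigma$, whose relative interior consists of points with ball neighborhoods. The only (immaterial) difference is in how that facet is found: the paper takes a facet $\tau\supseteq\sigma$ of maximum dimension and uses that homology vanishes above $\dim \mathrm{link}(\sigma;\K)$, whereas you extract a $(j-\dim\sigma-1)$-dimensional face of the link from the nonvanishing chain group and enlarge $\sigma\cup\tau$ to a facet.
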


\begin{proof}
First notice that if $p$ has a neighborhood homeomorphic to an open $j$-ball for some $j\geq m$, then it follows from excision property that $H_j(X,X-p)\neq 0$, see \cite[Lemma 35.1]{Munk} for example. Thus, $p\in\Xm$.

Now, let $p\in D_j(X;\k)$ and let $\K$ be any triangulation of $X$. Let $\sigma$ be the unique face of $\K$ that contains $p$ in its interior. Also, let $\tau$ be a facet of $\K$ of maximum cardinality that contains $\sigma$. It follows from \cite[Lemma 3.3]{MTC} that \(\wt{H}_{j-\dim\sigma-1}(\lk;\k)\neq 0\). In particular, $j\leq \dim\tau$ because $j-\dim\sigma-1\leq \dim\lk=\dim\tau-\dim\sigma-1$. Finally, the conclusion follows as the interior of $\tau$ is an open ball of dimension $\geq j$ and $p\in\tau$.  
\end{proof}

Note that the result above justifies the absence of the field $\k$ in the notation of $\Xm$. The space $\Xm$ is a polyhedron. In fact, for a simplicial complex $\K$, if we let $\K^{\langle m\rangle}$ to be the subcomplex of $\K$ generated by all facets of dimension at least $m$, then $\Xm$ is the geometric realization of $\K^{\langle m\rangle}$, see \cite[Proposition 2.4]{BWW}. 

We let the \emph{dimension} of $D_k(X;\k)$ to be the maximum $\ell$ such that there is an $\ell$-dimensional open ball contained in $D_k(X;\k)$. Also, we use the convention that a set is empty if and only if its dimension is negative.  

\begin{thm}\label{STC}
Let $\K$ be a $(d-1)$-dimensional simplicial complex and $X=|\K|$ be its geometric realization. Then $\K$ is \Serre over $\k$ for some $2\leq r\leq d$ if and only if the following are satisfied:
\begin{enumerate}
\item[$(1)$] $X^{\langle d-1\rangle}=X$,
\item[$(2)$] $\wt{H}_j(X;\k)=0$ for all $j<r-1$, and 
\item[$(3)$] $\dim \left(D_k(X;\k)\right)\leq k-r$ for all $k\leq d-2$. 
\end{enumerate}
\end{thm}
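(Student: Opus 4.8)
The plan is to reduce the whole statement to the single local-homology identity
\[
H_j(X,X-p;\k)\;\cong\;\wt{H}_{j-\dim\sigma-1}(\mathrm{link}(\sigma;\K);\k),
\]
valid whenever $p$ lies in the relative interior of the face $\sigma$; this is \cite[Lemma 3.3]{MTC}, already used above. Because every point of the relative interior of $\sigma$ has the same link, $D_k(X;\k)$ is exactly the union of the relative interiors of those faces $\sigma$ with $\wt{H}_{k-\dim\sigma-1}(\mathrm{link}(\sigma;\K);\k)\neq 0$. The first thing I would record is the dimension count
\[
\dim\bigl(D_k(X;\k)\bigr)\;=\;\max\{\dim\sigma:\ \wt{H}_{k-\dim\sigma-1}(\mathrm{link}(\sigma;\K);\k)\neq 0\},
\]
with an empty maximum read as negative: the closure of $D_k(X;\k)$ is the subcomplex generated by the faces appearing in it, so its dimension is the largest such $\dim\sigma$, while $D_k(X;\k)$ already contains an open ball of that dimension, namely a top relative interior.

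With this identity the argument becomes pure index bookkeeping. For the empty face, $\mathrm{link}(\emptyset;\K)=\K$ and $\wt{H}_*(\K;\k)=\wt{H}_*(X;\k)$, and $r\le d$ gives $\min\{r-1,d-1\}=r-1$; thus the $(S_r)$ requirement at $\sigma=\emptyset$ is verbatim condition $(2)$. For a nonempty face I would substitute $i=k-\dim\sigma-1$. The inequality $\dim D_k\le k-r$ says exactly that $\wt{H}_{k-\dim\sigma-1}(\mathrm{link}(\sigma;\K);\k)\neq0$ forces $\dim\sigma\le k-r$, which under the substitution reads: $\wt{H}_i(\mathrm{link}(\sigma;\K);\k)=0$ whenever $i\le r-2$, with the range restriction $k\le d-2$ turning into $i\le d-\dim\sigma-3$. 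Hence condition $(3)$ is equivalent to the vanishing of $\wt{H}_i(\mathrm{link}(\sigma;\K);\k)$ for all $i\le\min\{r-2,\,d-\dim\sigma-3\}$ and all nonempty $\sigma$, i.e. for all $i<\min\{r-1,\,d-\dim\sigma-2\}$ --- precisely the $(S_r)$ requirement at every nonempty face. Therefore $(2)$ and $(3)$ together are equivalent to $(S_r)$.

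It then remains to place condition $(1)$. I would note that purity is simply the $i=-1$ layer of the same identity: $\sigma$ is a facet exactly when $\mathrm{link}(\sigma;\K)=\{\emptyset\}$, i.e. when $\wt{H}_{-1}(\mathrm{link}(\sigma;\K);\k)\neq0$. If some facet $\sigma$ had dimension $s\le d-2$, then at $k=s\le d-2$ we would get $\mathrm{relint}(\sigma)\subseteq D_s(X;\k)$, whence $\dim D_s\ge s>s-r$, contradicting $(3)$; equivalently this is the violated $i=-1$ instance of $(S_r)$. Thus $(S_r)$, and already $(3)$, forces every facet to be $(d-1)$-dimensional, which is exactly $X^{\langle d-1\rangle}=X$; conversely $(1)$ is harmless in the reverse implication, since $(2)$ and $(3)$ by themselves already yield $(S_r)$.

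The step I expect to be most delicate is the dimension identity for $D_k(X;\k)$ combined with a scrupulous treatment of the reduced-homology conventions at the extreme index $i=-1$: one must keep $\wt{H}_{-1}(\{\emptyset\};\k)=\k$ while $\wt{H}_{-1}$ of any link containing a vertex vanishes, and verify that the off-by-one $k=i+\dim\sigma+1$ carries the top of the range $k=d-2$ to $i=-1$ exactly for the $(d-2)$-dimensional facets, so that the cutoff $k\le d-2$ neither drops a genuine $(S_r)$ constraint nor spuriously constrains the full-dimensional facets sitting in $D_{d-1}(X;\k)$.
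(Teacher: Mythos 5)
Your proof is correct and takes essentially the same route as the paper: both arguments rest entirely on Munkres' local-homology identity $H_k(X,X-p;\k)\cong\wt{H}_{k-\dim\sigma-1}(\mathrm{link}(\sigma;\K);\k)$ for $p\in\mathring{\sigma}$ and the index translation $k=i+\dim\sigma+1$, with $k\le d-2$ corresponding to $i\le d-\dim\sigma-3$. The only differences are organizational---you phrase it as a single dictionary (via the dimension formula for $D_k(X;\k)$) rather than two separate implications, and you additionally observe that condition $(3)$ already forces purity, so $(1)$ is redundant---but the mathematical content coincides with the paper's proof.
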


\begin{proof}
First assume that $\K$ is \Serre over $\k$. The conditions $(1)$ and $(2)$ clearly hold. We verify $(3)$. Let $p\in D_k(X;\k)$ for some $k\leq d-2$ and $\sigma$ be the unique face of $\K$ such that $p\in\mathrm{int}(\sigma)$. We must show that $\dim \sigma\leq k-r$. We have 
\[ \wt{H}_{k-\dim \sigma-1}(\lk;\k)=H_k(X,X-p;\k)\neq 0.\]
Since $\K$ is \Serre over $\k$, we have $k-\dim \sigma-1>\min\{r-2,d-\dim\sigma -3\}$. Now, since $k\leq d-2$, it follows that $k-\dim \sigma-1>r-2$. Hence $\dim\sigma\leq k-r$ as desired. 

Next suppose that $\K$ is not \Serre over $\k$. We can assume $\K$ is pure, since otherwise the condition $(1)$ would be violated. There is a face $\sigma$ in $\K$ such that \( \wt{H}_{j}(\lk;\k)\neq 0\) for some $j\leq \min\{r-2,d-\dim\sigma -3\}$. If $\sigma$ is the empty-set, then the condition $(2)$ is not satisfied. For any point $p$ in the interior of $\sigma$ we have \(H_{j+\dim \sigma +1}(X,X-p;\k)\neq 0$. In particular, $\mathring{\sigma}\subseteq D_{j+\dim \sigma+1}$ which implies that 
\[\dim (D_{j+\dim \sigma+1})\geq \dim\sigma>\dim\sigma+1+j-r,\]
contradicting the condition $(3)$. 

\end{proof}

\begin{cor}[{{\cite[Theorem 4.4]{Yanagawa2011}}}]
The Serre's condition \Serre is a topological property. 
\end{cor}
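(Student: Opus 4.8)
The plan is to read the corollary directly off Theorem~\ref{STC}: that result characterizes the \Serre condition of $\K$ entirely through conditions $(1)$--$(3)$ imposed on the pair $(X,\k)$ with $X=|\K|$, so it suffices to check that each of these three conditions is an invariant of the homeomorphism type of $X$ together with the characteristic of $\k$. Thus, given two simplicial complexes $\K$ and $\K'$ with $|\K|\cong|\K'|$, I will argue that the truth value of $(1)$--$(3)$ is the same for both, whence $\K$ is \Serre over $\k$ if and only if $\K'$ is.

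First I would record that the numerical parameter $d$ is itself topological: if $\dim\K=d-1$, then $d-1$ equals the covering dimension of $X$, which is a homeomorphism invariant, so any two triangulations of $X$ share the same value of $d$. Consequently the index range $k\le d-2$, the linear bound $k-r$ appearing in $(3)$, and the hypothesis $2\le r\le d$ are all unaffected by the choice of triangulation.

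Next I would verify the three conditions in turn. For $(1)$, the Lemma identifies $X^{\langle d-1\rangle}$ with the closure of the set of points of $X$ admitting a Euclidean neighborhood of dimension at least $d-1$; this description refers only to the topology of $X$ (indeed not even to $\k$), so ``$X^{\langle d-1\rangle}=X$'' is a topological condition. For $(2)$, the reduced singular homology $\wt{H}_j(X;\k)$ is a homeomorphism invariant depending on $\k$ only through its characteristic (via universal coefficients), so its vanishing for all $j<r-1$ is topological. For $(3)$, the set $D_k(X;\k)$ is defined solely through the local homology groups $H_k(X,X-p;\k)$, which are invariant under homeomorphism; hence $D_k(X;\k)$ is a topologically determined subset, and $\dim\left(D_k(X;\k)\right)$ — the largest dimension of an open ball it contains — is again a homeomorphism invariant, so $(3)$ is topological.

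Combining these observations, the validity of $(1)$--$(3)$ depends only on $|\K|$ and $\mathrm{char}\,\k$, and by Theorem~\ref{STC} this is precisely the assertion that $\K$ is \Serre for $2\le r\le d$; the boundary cases $r\ge d$ (Cohen--Macaulayness, already known to be topological by Munkres~\cite{MTC}) and $r\le 1$ (vacuous) are topological as well. Since the substantial work has been absorbed into Theorem~\ref{STC} and the Lemma, the corollary is essentially formal; I expect the only points requiring care to be the explicit bookkeeping of the field's characteristic and the verification that $\dim\left(D_k(X;\k)\right)$ is genuinely intrinsic to $X$ rather than an artifact of the chosen triangulation.
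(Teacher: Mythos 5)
Your proposal is correct and follows exactly the paper's (implicit) argument: the corollary is read off directly from Theorem~\ref{STC}, whose conditions $(1)$--$(3)$ are phrased purely in terms of the space $X=|\K|$ and the field $\k$ (with dependence on $\k$ only through its characteristic), so the Serre condition cannot distinguish triangulations of the same polyhedron. Your additional bookkeeping (topological invariance of $d$, of the sets $D_k(X;\k)$, and the edge cases of $r$) is exactly the routine verification the paper leaves to the reader.
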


 The property of being normal is, of course, independent from the field characteristic. The following example shows that being $(S_r)$ depends on the field characteristic for all $r\geq 3$.
\begin{ex}[{{cf. \cite[Question 2.5]{SerreSurvey}}}]
If $M$ is a $(d-1)$-dimensional manifold, then all the local homology groups $H_{\ast}(M,M-p;\k)$ are zero, except in the top-dimension, see \cite[Section 35]{Munk}. So, one can see, using Theorem \ref{STC} for instance, that $M$ is \Serre over $\k$ if and only if $\wt{H}_i(M;\k)=0$ for all $i<r-1$. In particular, the real projective space $\mathbb{R}\mathrm{P}^{d-1}$ is \Serre over a field of characteristic zero for all $r\geq 2$. However, if $\k$ has characteristic two, then $\wt{H}_2(\mathbb{R}\mathrm{P}^{d-1};\k)\neq 0$. Therefore, $\mathbb{R}\mathrm{P}^{d-1}$ is not \Serre over $\k$ if $r\geq 3$.
\end{ex}

\section{Sequentially \Serre}

Let $\K$ be a simplicial complex. The \emph{pure $m$-skeleton} $\K^{[m]}$ is the subcomplex generated by all $m$-dimensional faces. The pure $m$-skeleton can be also considered as the usual $m$-dimensional skeleton of the complex $\Km$. A simplicial complex $\K$ is \emph{sequentially \Serre}if all of its pure skeleta $\K^{[m]}$ are \Serre. Sequentially \Serre interpolates between the sequential Cohen--Macaulayness and the natural property of having connected face links. We refer to \cite{BWW,HTYZ,SerreSurvey} for more on these properties. Sequential Cohen--Macaulayness is a topological property, see \cite[Theorem 4.1.6]{W} for instance. It was asked in \cite[Question 4.14]{HTYZ} if the same hold true for all sequential \Serre conditions. We give an affirmative answer to this question. 

\begin{prop}\label{CS}
A $(d-1)$-dimensional simplicial complex $\K$ is sequentially \Serre over $\k$ if and only if for all $0\leq m\leq d-1$, all $\sigma\in\K^{\langle m\rangle}$, and all $j<\min\{m-\dim\sigma-1,r-1\}$ one has $\wt{H}_j(\mathrm{link}(\sigma;\Km);\k)=0$.
\end{prop}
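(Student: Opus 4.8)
The plan is to verify the equivalence one value of $m$ at a time. By definition $\K$ is sequentially \Serre exactly when every pure skeleton $\K^{[m]}$ is \Serre, so it suffices to prove, for each fixed $0\le m\le d-1$, that $\K^{[m]}$ is \Serre if and only if $\wt{H}_j(\mathrm{link}(\sigma;\Km);\k)=0$ for every $\sigma\in\Km$ and every $j<\min\{m-\dim\sigma-1,r-1\}$. Since $\K^{[m]}$ is a pure $m$-dimensional complex, applying the definition of \Serre to it (with the ambient $d$ replaced by $m+1$) says precisely that $\wt{H}_i(\mathrm{link}(\tau;\K^{[m]});\k)=0$ for every face $\tau\in\K^{[m]}$ and every $i<\min\{r-1,\,m-\dim\tau-1\}$. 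Thus the whole proposition reduces to matching these two families of vanishing conditions.

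The bridge between them is the fact, recalled before the statement, that $\K^{[m]}$ is the $m$-skeleton of $\Km$. I would first record the purely combinatorial identity
\[ \mathrm{link}(\tau;\K^{[m]}) \;=\; \bigl(\mathrm{link}(\tau;\Km)\bigr)^{(m-\dim\tau-1)},\]
the $(m-\dim\tau-1)$-skeleton of the link computed inside $\Km$; this is immediate from unwinding the definitions, since a face $\eta$ of $\mathrm{link}(\tau;\Km)$ survives in the $m$-skeleton of $\Km$ exactly when $\dim\eta\le m-\dim\tau-1$. Writing $n=m-\dim\tau-1$, every degree appearing in the \Serre condition for $\K^{[m]}$ satisfies $i<\min\{r-1,n\}\le n$, hence $i<n$ strictly.

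The key step is then the elementary observation that passing to the $n$-skeleton leaves reduced homology unchanged below degree $n$: because the simplicial chain groups of a complex $\Gamma$ and of its $n$-skeleton $\Gamma^{(n)}$ coincide in all degrees $\le n$, the inclusion induces an isomorphism $\wt{H}_i\bigl(\Gamma^{(n)};\k\bigr)\cong\wt{H}_i(\Gamma;\k)$ for all $i<n$ (and merely a surjection for $i=n$, which we never need). Applying this with $\Gamma=\mathrm{link}(\tau;\Km)$, together with the identity above and the bound $i<n$, yields $\wt{H}_i(\mathrm{link}(\tau;\K^{[m]});\k)\cong\wt{H}_i(\mathrm{link}(\tau;\Km);\k)$ throughout the relevant range. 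Consequently the vanishing condition for $\K^{[m]}$ is equivalent term by term to the vanishing of $\wt{H}_i(\mathrm{link}(\tau;\Km);\k)$, which is exactly the condition stated in the proposition; since the comparison is an honest isomorphism, this gives a genuine \emph{if and only if} at each level rather than a one-sided implication.

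The remaining point, and the only place needing care, is that the two index sets agree. In the stated condition $\sigma$ ranges over all of $\Km$, whereas $\tau$ ranges only over $\K^{[m]}$; but a face of dimension $\ge m$ makes the range $j<\min\{m-\dim\sigma-1,r-1\}$ empty (the bound is $\le -1$), so only faces of dimension $\le m-1$ contribute, and these are precisely the faces of $\K^{[m]}$ of dimension $<m$, which are also the only ones contributing on the $\K^{[m]}$ side. I expect the main obstacle to be exactly this bookkeeping — keeping straight the three dimension quantities (the skeleton degree $n$, the \Serre cutoff, and the codimension of $\tau$) and treating the boundary cases $\dim\tau=m$ and the empty face $\tau=\emptyset$, where $\wt{H}_{-1}$ enters — rather than any deep topology, the entire homological input being the skeleton comparison above.
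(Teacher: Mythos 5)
Your proof is correct and takes essentially the same route as the paper's: both rest on identifying $\mathrm{link}(\tau;\K^{[m]})$ with the $(m-\dim\tau-1)$-skeleton of $\mathrm{link}(\tau;\Km)$ and on the fact that passing to the $n$-skeleton leaves reduced homology unchanged in degrees below $n$. Your explicit handling of the index bookkeeping (faces of dimension $\geq m$ giving vacuous conditions) is spelled out more carefully than in the paper's very terse proof, but the mathematical content is identical.
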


\begin{proof}
Let $\sigma$ be a face of $\K$ and $m$ be a positive integer $\leq\dim\sigma$. It is not difficult to show that 
\begin{eqnarray*}
\mathrm{link}(\sigma;\Km)&=&\lk^{\langle m-\dim\sigma-1\rangle}\quad \text{and }\\ 
\mathrm{link}(\sigma;\K^{[m]})&=&\lk^{[ m-\dim\sigma-1]}.
\end{eqnarray*}
Note that $\lk^{[ m-\dim\sigma-1]}$ is the $(m-\dim\sigma-1)$-dimensional skeleton of $\lk^{\langle m-\dim\sigma-1\rangle}$ and, in particular, for all $j<m-\dim\sigma-1$ these two complexes must have the same homology groups.

\end{proof}

\begin{thm}\label{SSTC}
Let $\K$ be a $(d-1)$-dimensional simplicial complex and $X=|\K|$ be its geometric realization. Then $\K$ is sequentially \Serre over $\k$ for some $2\leq r\leq d$ if and only if the following are satisfied for all $0\leq m\leq d-1$:
\begin{enumerate}
\item[$(1)$] $\wt{H}_j(\Xm;\k)=0$ for all $j<\min\{m,r-1\}$, and 
\item[$(2)$] $\dim \left(D_k(\Xm;\k)\right)\leq k-r$ for all $k\leq m-1$. 
\end{enumerate}
\end{thm}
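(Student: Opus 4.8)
The plan is to reduce Theorem~\ref{SSTC} to the already-established Theorem~\ref{STC} applied pure-skeleton by pure-skeleton. By definition $\K$ is sequentially \Serre if and only if each pure skeleton $\K^{[m]}$ is \Serre, so I would like to apply Theorem~\ref{STC} to the complex $\K^{[m]}$ for each $0\le m\le d-1$. The crucial observation is that $\K^{[m]}$ is a pure $m$-dimensional complex (it is generated by its $m$-faces), and its geometric realization is $|\K^{[m]}|$, the $m$-dimensional skeleton of $\Km$. However, the three conditions in Theorem~\ref{STC} are stated in terms of the local-homology sets $D_k$ and the spaces $X^{\langle \cdot\rangle}$ of the realization of the complex to which the theorem is applied, so the main bookkeeping task is to translate those quantities, computed for $X_m \defeq |\K^{[m]}|$, into the quantities $\Xm=|\Km|$ and $D_k(\Xm;\k)$ that appear in the statement of Theorem~\ref{SSTC}.

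First I would record the precise dimensional setup: applying Theorem~\ref{STC} to $\K^{[m]}$ (which has dimension $m$, so plays the role of a ``$(d'-1)$''-complex with $d'=m+1$) with the \emph{same} parameter $r$, noting the hypothesis $2\le r\le d$ is compatible only when $r\le m+1$; for the small values $m<r-1$ the condition $\min\{m,r-1\}=m$ in item $(1)$ and the vacuous range $k\le m-1$ in item~$(2)$ must be checked to match up with the degenerate cases of Theorem~\ref{STC}. The three conditions of Theorem~\ref{STC} applied to $\K^{[m]}$ read: (1$'$) $(X_m)^{\langle m\rangle}=X_m$, which holds automatically since $\K^{[m]}$ is pure of dimension $m$; (2$'$) $\wt H_j(X_m;\k)=0$ for $j<r-1$; and (3$'$) $\dim D_k(X_m;\k)\le k-r$ for $k\le m-1$. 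Condition (1$'$) is free, so the content is that (2$'$) and (3$'$), ranging over all $m$, are equivalent to items~$(1)$ and~$(2)$ of Theorem~\ref{SSTC}.

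The key step, and the one I expect to be the main obstacle, is the identification of $X_m=|\K^{[m]}|$ with $\Xm=|\Km|$ at the level of the relevant homology and local-homology data. The complex $\K^{[m]}$ is the $m$-dimensional skeleton of $\Km$, and these two complexes differ only in faces of dimension $>m$; by the reasoning already used in Proposition~\ref{CS}, passing to the $m$-skeleton does not change reduced homology in degrees $j<m$, so $\wt H_j(X_m;\k)=\wt H_j(\Xm;\k)$ for $j<m$, and combined with $j<r-1$ this yields the cutoff $j<\min\{m,r-1\}$ in item~$(1)$. For the local-homology sets I would argue that for $k\le m-1$ a point $p$ lies in $D_k(X_m;\k)$ if and only if it lies in $D_k(\Xm;\k)$: the local homology $H_k(\,\cdot\,,\,\cdot-p;\k)$ of a complex in degrees below its top dimension is computed from the link, and the link of $\sigma$ truncated to the $m$-skeleton agrees with the full link in homological degrees $<m-\dim\sigma-1$, exactly the range $k-\dim\sigma-1<m-\dim\sigma-1$ forced by $k\le m-1$. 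This gives $\dim D_k(X_m;\k)=\dim D_k(\Xm;\k)$ for $k\le m-1$, turning (3$'$) into item~$(2)$.

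Assembling these identifications, I would conclude: $\K$ is sequentially \Serre $\iff$ every $\K^{[m]}$ is \Serre $\iff$ conditions (2$'$) and (3$'$) hold for every $m$ $\iff$ items~$(1)$ and~$(2)$ of the theorem hold for every $m$. The one point requiring care is the boundary behaviour at the top of each range (degree $j=m$ or $k=m$), where the skeleton-truncation identities can fail; I would handle this by observing that in exactly those borderline degrees the relevant inequality in the \Serre definition is governed by the $r-1$ term rather than the dimension term, so the $\min\{m,r-1\}$ and $k\le m-1$ cutoffs are precisely what is needed to stay inside the range where the truncation is homology-preserving. Verifying that this boundary matching is exact, rather than off by one, is the delicate bookkeeping that constitutes the heart of the argument.
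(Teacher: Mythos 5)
Your strategy is correct, but it is organized differently from the paper's proof, and two of its steps need more care than you give them. The paper never applies Theorem~\ref{STC} to the pure skeleta: instead it first proves Proposition~\ref{CS}, which uses the truncation principle (a $t$-skeleton has the same homology as the ambient complex in degrees $<t$, applied to links via $\mathrm{link}(\sigma;\K^{[m]})=\lk^{[m-\dim\sigma-1]}$ and $\mathrm{link}(\sigma;\Km)=\lk^{\langle m-\dim\sigma-1\rangle}$) to rewrite the sequentially \Serre condition entirely in terms of links in $\Km$, with the dimension cap $j<\min\{m-\dim\sigma-1,r-1\}$ built into the statement; it then re-runs the local-homology argument of Theorem~\ref{STC} verbatim on $\Xm$. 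You instead black-box Theorem~\ref{STC} on each $\K^{[m]}$ and translate its conclusions from $X_m=|\K^{[m]}|$ to $\Xm$. That is a legitimate alternative and arguably a cleaner reduction (no argument is repeated), but it forces you to confront exactly the two issues that the paper's formulation sidesteps.

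First, when $m<r-1$ you cannot invoke Theorem~\ref{STC} on $\K^{[m]}$ with parameter $r$ at all, since its hypothesis $2\le r\le d'$ fails for $d'=m+1$. The repair is to observe that for an $m$-dimensional complex the condition \Serre with $r\ge m+1$ coincides with $(S_{m+1})$, i.e.\ Cohen--Macaulayness, because the degrees probed by the definition are capped by $d'-\dim\sigma-2=m-\dim\sigma-1$; one then applies Theorem~\ref{STC} with parameter $m+1$ (the case $m=0$ being vacuous on both sides) and checks that the resulting conditions agree with items $(1)$ and $(2)$, both reducing to vanishing below degree $m$ and to $D_k=\emptyset$ under the convention that negative dimension means empty. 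Note that your stated justification for this matching is backwards: in the dangerous range it is the \emph{dimension} term of the min, not the $r-1$ term, that governs and prevents the Serre condition from probing the top degree where truncation fails. Second, your identification $D_k(X_m;\k)=D_k(\Xm;\k)$ for $k\le m-1$ is only argued for points of $X_m$; you must also rule out points of $\Xm\setminus X_m$ lying in $D_k(\Xm;\k)$. This is true but needs saying: such a point has carrier $\sigma$ with $\dim\sigma\ge m+1>k+1$, so $H_k(\Xm,\Xm-p;\k)\cong\wt{H}_{k-\dim\sigma-1}(\mathrm{link}(\sigma;\Km);\k)$ sits in degree $\le -2$ and vanishes. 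With these two repairs your reduction goes through, and the range bookkeeping matches exactly as you claim.
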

\begin{proof}

Let $\K$ be sequentially \Serre over $\k$. Fix $k$ and $m$ so that $k\leq m-1$. Let $p$ be a point in $D_k(\Xm;\k)$, and consider the unique $\sigma$ that $p\in\mathring{\sigma}$. Since $\wt{H}_{k-\dim\sigma-1}(\mathrm{link}(\sigma;\Km);\k)\neq 0$, it follows from Proposition \ref{CS} that $\dim\sigma\leq k-r$ as desired. 

Now, assume that $\K$ is not sequentially \Serre over $\k$. Then there exist an integer $m$, a face $\sigma$ of $\K$ and an integer $j<\min\{m-\dim\sigma-1,r-1\}$ such that $\wt{H}_j(\mathrm{link}(\sigma;\Km);\k)\neq 0$. Set $k:=j+\dim\sigma+1<m$. The rest of the argument is exactly as in the proof of Theorem \ref{STC}.

\end{proof}

\begin{cor}
Sequentially \Serre is a topological property.
\end{cor}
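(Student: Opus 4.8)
The plan is to read off the corollary from Theorem~\ref{SSTC} by verifying that every quantity entering its conditions $(1)$ and $(2)$ is a homeomorphism invariant of $X=|\K|$ that depends on the field only through its characteristic. Thus, given two complexes $\K$ and $\K'$ together with a homeomorphism $h\colon |\K|\to|\K'|$, I would show that $\K$ is sequentially \Serre over $\k$ if and only if $\K'$ is, for every field $\k$, and that the answer depends on $\k$ only via $\mathrm{char}\,\k$.

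First I would note that the dimension $d-1=\dim X$ is a topological invariant, so the range $0\le m\le d-1$ over which the conditions of Theorem~\ref{SSTC} are tested is the same for $\K$ and $\K'$. The decisive step is that each space $\Xm$ is a topological invariant of $X$ and is moreover independent of $\k$: this is precisely the content of the opening Lemma of Section~1 together with the remark following it, which identifies $\Xm$ with the closure of the set of points of $X$ admitting a Euclidean neighborhood of dimension $\ge m$, a description involving neither a triangulation nor a field. Consequently the homeomorphism $h$ restricts to a homeomorphism $\Xm\cong (X')^{\langle m\rangle}$ for every $m$.

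With this in hand, conditions $(1)$ and $(2)$ transfer immediately. The reduced homology groups $\wt{H}_j(\Xm;\k)$ depend only on the homeomorphism type of $\Xm$ and, over a field, only on $\mathrm{char}\,\k$, so condition $(1)$ holds for $\K$ exactly when it holds for $\K'$. Likewise, the sets $D_k(\Xm;\k)$ are defined purely through the local homology $H_k(\Xm,\Xm-p;\k)$, hence are carried onto the corresponding sets for $(X')^{\langle m\rangle}$ by $h$, and their dimensions — the largest dimension of an embedded open ball — are preserved; this yields condition $(2)$. Since Theorem~\ref{SSTC} equates the sequential \Serre condition with the conjunction of $(1)$ and $(2)$ over all $m$, the equivalence for $\K$ and $\K'$ follows, and the dependence on the field is only through its characteristic.

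I do not expect a genuine obstacle, since the substantive work lies in Theorem~\ref{SSTC} and the opening Lemma, which I may invoke freely. The one point deserving explicit emphasis — and the closest thing to a difficulty — is the topological invariance and field-independence of the spaces $\Xm$; were these to depend on the chosen triangulation or on $\k$, conditions $(1)$ and $(2)$ would no longer be expressed in intrinsic terms, and the reduction would collapse. Everything else is a routine transport of homological and dimensional data across the homeomorphism $h$.
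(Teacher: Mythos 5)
Your proposal is correct and follows exactly the route the paper intends: the corollary is read off from Theorem~\ref{SSTC}, using the opening Lemma (and the remark after it) to see that each $\Xm$ is an intrinsic, field-independent subspace of $X$, so that conditions $(1)$ and $(2)$ are homeomorphism invariants depending on $\k$ only through its characteristic. The paper leaves this deduction implicit, and your write-up supplies precisely the details it omits.
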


\vspace*{0.5cm}
\paragraph*{ Acknowledgement.}

This research has been supported by the grant KAW-stipendiet 2015.0360 from the Knut and Alice Wallenberg Foundation.

\def\cprime{$'$}


\begin{thebibliography}{1}

\bibitem{BWW}
{\sc A.~Bj\"{o}rner, M.~Wachs, and V.~Welker}, {\em On sequentially
  {C}ohen-{M}acaulay complexes and posets}, Israel J. Math., 169 (2009),
  pp.~295--316.

\bibitem{HTYZ}
{\sc H.~Haghighi, N.~Terai, S.~Yassemi, and R.~Zaare-Nahandi}, {\em
  Sequentially {$S_r$} simplicial complexes and sequentially {$S_2$} graphs},
  Proc. Amer. Math. Soc., 139 (2011), pp.~1993--2005.

\bibitem{Munk}
{\sc J.~R. Munkres}, {\em Elements of algebraic topology}, Addison-Wesley
  Publishing Company, Menlo Park, CA, 1984.

\bibitem{MTC}
\leavevmode\vrule height 2pt depth -1.6pt width 23pt, {\em Topological results
  in combinatorics}, Michigan Math. J., 31 (1984), pp.~113--128.

\bibitem{SerreSurvey}
{\sc M.~R. Pournaki, S.~A. Seyed~Fakhari, N.~Terai, and S.~Yassemi}, {\em
  Survey article: {S}implicial complexes satisfying {S}erre's condition: a
  survey with some new results}, J. Commut. Algebra, 6 (2014), pp.~455--483.
  
\bibitem{W}
{\sc M.~L. Wachs}, \emph{Poset topology: tools and applications}, Geometric
  combinatorics, IAS/Park City Math. Ser., vol.~13, Amer. Math. Soc.,
  Providence, RI, 2007, pp.~497--615. 

\bibitem{Yanagawa2011}
{\sc K.~Yanagawa}, {\em Dualizing complex of the face ring of a simplicial
  poset}, J. Pure Appl. Algebra, 215 (2011), pp.~2231--2241.

\end{thebibliography}
\end{document}